\newcommand{\bp}{\boldsymbol{p}}
\newcommand{\bq}{\boldsymbol{q}}
\newcommand{\bx}{\boldsymbol{x}}
\newcommand{\bw}{\boldsymbol{w}}
\newcommand{\coneN}{\mathbb{K}^{N}_{+}}
\newcommand{\intconeN}{\overset{\circ}{\mathbb{K}}{\vphantom{\mathbb{K}}}^{N}_{+}}
\newcommand{\simpN}{\Sigma_{N}}
\newcommand{\intsimpN}{\overset{\circ}{\Sigma}_{N}}
\newtheorem{corollary}{Corollary}
\newtheorem{theorem}{Theorem}
\newtheorem{remark}{Remark}
\newtheorem{definition}{Definition}
\begin{document}
\date{}

\title{Asymptotic behaviour of random Markov chains with
tridiagonal generators}

\author{\textsc{P. E.~Kloeden}\thanks{Partially supported the DFG grant
KL~1203/7-1, the  spanish Ministerio de Ciencia e Innovaci\'{o}n
project MTM2011-22411, the Consejer\'{\i}a de Innovaci\'{o}n,
Ciencia y Empresa (Junta de Andaluc\'{\i}a) under the Ayuda
2009/FQM314 and
the Proyecto de Excelencia P07-FQM-02468.}\\
\small\textsl{Institut f\"ur  Mathematik, Goethe Universit\"at},\\
\small\textsl{D-60054 Frankfurt am Main, Germany}\\
\small E-mail:~\texttt{kloeden@math.uni-frankfurt.de}\\[5mm]
\textsc{V. S.~Kozyakin}\thanks{Partially supported by the
Russian Foundation for Basic Research, project no. 10-01-93112.}\\
\small\textsl{Institute for Information Transmission
Problems}\\\small\textsl{Russian Academy of Sciences}\\
\small\textsl{Bolshoj Karetny lane, 19, 101447 Moscow, Russia}\\
\small E-mail:~\texttt{kozyakin@iitp.ru}}

\maketitle


\vspace*{0.3cm}

\abstract Continuous-time discrete-state random Markov chains
generated by a random linear differential equation with  a random
tridiagonal matrix are shown to have a random attractor consisting
of singleton subsets, essentially a random path, in the simplex of
probability vectors. The proof uses comparison theorems for
Carath\'eodory random differential equations  and the fact that the
linear cocycle generated by the Markov chain is a uniformly
contractive mapping of the positive cone into itself with respect
to the Hilbert projective metric.  It does not involve
probabilistic properties of the sample path   and is thus equally
valid in the nonautonomous deterministic context of Markov chains
with, say, periodically varying transitions probabilities, in which
case the attractor is a periodic path.

\vspace*{0.1cm}

\noindent\textbf{AMS Subject Classification:}\quad Primary AMS
Subject Classification:   34F05, 37H10, 60H25, 60J10  Secondary
15B48, 15B51, 15B52. \newline

\noindent\textbf{Key words:}\quad Random differential equations,
random Markov chains, positive cones, linear cocycles, uniformly
contracting cocycles,  random attractors \newline

\vspace*{0.2cm}

\section{Introduction}\label{intro}

The  simplex $\Sigma_N$ in $\mathbb{R}^N$ defined by
\[
\Sigma_N = \bigg\{ \bp = (p_1,\dots,p_N)^T :~ \sum_{j=1}^ N p_j = 1, ~ p_1,\ldots,p_N \in[0,1] \bigg\},
\]
is invariant under the dynamics of  the system of ordinary
differential equations
\begin{equation}\label{DDE}
\frac{d\bp}{dt} = Q \bp
\end{equation}
in $\mathbb{R}^N$,  where the   $N\times N$ matrix $Q$ satisfies
\[
\boldsymbol{1}_N^T Q \equiv \boldsymbol{0}.
\]
Here $\boldsymbol{1}_N$ is the column vector in $\mathbb{R}^N$ with
all  components   equal to $1$.

The system \eqref{DDE} restricted to $\Sigma_N$  generates a
continuous-time finite-state Markov chain with a generator $Q$.
Such Markov chains are common in the  biological sciences with
generators given by  an  $N\times N$ tridiagonal matrix
\begin{equation}\label{E-defQ}
Q = \left[\begin{smallmatrix}
-q_1 & q_2 & & & & & \bigcirc\\
\hphantom{-}q_1 & -(q_2+q_3) & q_4 & & & & \\
 & \ddots & \ddots & \ddots & \ddots & \ddots & \\
 & & & & q_{2N-5} & -(q_{2N-4}+q_{2N-3}) & \hphantom{-}q_{2N-2} \\
\bigcirc & & & & & q_{2N-3} & - q_{2N-2}
\end{smallmatrix}\right],
\end{equation}
with positive  off-diagonal components $\{q_{i}\}$, see
\cite{AllenL:10,AEH:09,WodKom:05}.    The components of $Q$ may
depend on time or be random, or both, i.e., depend on a stochastic
process, in which case $Q$ will be written $Q(\theta_t \omega)$
(the undefined terms will be explained later).

The asymptotic behaviour of the discrete-time version  of such
Markov chains were investigated by the authors in
\cite{KloKoz:DCDSB11}. Their transition matrices are of the form
$I_N + Q(t_n) \Delta_n$, where  $I_N$ is the $N\times N$   identity
matrix, $Q(t_n)$ is value of the generator matrix  $Q$ at time
$t_n$, i.e.,  $Q(t_n) = Q(\theta_{t_n} \omega)$,  and $\Delta_n$ is
a positive parameter. This is just the Euler scheme for the
differential equation  \eqref{DDE} on $\Sigma_N$ with the time step
$\Delta_n$. It was shown in \cite{KloKoz:DCDSB11} that these
discrete-time Markov chains generate a linear random dynamical
system which has a random attractor consisting of singleton
components sets, essentially a random process of probability
vectors in $\Sigma_N$.

In this paper the corresponding result will be established for
contin-uous-time finite-state Markov chains generated by the
differential equation \eqref{DDE}  in $\Sigma_N$, where $Q$ is  an
$N\times N$ tridiagonal matrix of the form  \eqref{E-defQ} with
components driven by a stochastic process. This will be proved
directly. An indirect proof using the convergence of the  random
attractors of the Euler scheme  requires them to have a uniform
rate of attraction. This is the inverse problem of what is  usually
discussed in numerical dynamics, where  it is shown that the
numerical scheme preserves the properties of continuous-time
systems generated by  the  differential equation, see
\cite{KloLor:SIAMJNA86,SH}.

The paper is structured as follows.  Basic properties of
differential equations satisfying the Carath\'{e}odory property are
recalled in Section~\ref{D-ineq}. In particular, in
Subsection~\ref{D-dynamics},
 properties of deterministic linear differential
equations of the type \eqref{DDE} are investigated, while  in
Subsection~\ref{D-rand}
 some notions on related random differential
equations (RDE) of the type \eqref{DDE} are recalled. Finally, in
Subsection~\ref{D-rancos} it is shown that that the related RDE
preserves the positivity of  initial values and in
Section~\ref{S-random} the main result of the paper is presented, a
theorem on the existence of a random attractor consisting of
singleton component subsets in the system under consideration.

\section{Carath\'{e}odory differential equations }\label{D-ineq}

 The linear ordinary  differential equation  \eqref{DDE} with the matrix $Q$ depending on a stochastic process
is a differential equation of the Carath\'{e}odory type. Some basic
properties of such equations are recalled here along with some
comparison theorems,  which will be used  to establish the
positivity of solutions of the resulting random differential
equation.

An ordinary  differential equation on $\mathbb{R}^{N}$,
\begin{equation}\label{E-NDE}
    \frac{d\bx}{dt}=f(t,\bx),
\end{equation}
is called a Carath\'{e}odory  differential equations if the
vector-field $f(t,\bx)$ satisfies the \emph{Carath\'{e}odory
conditions}:
\begin{enumerate}[C1:]
  \item for every fixed $t$, the function $\bx \mapsto$
      $f(t,\bx)$ is continuous;
  \item for every fixed $\bx$, the function $t \mapsto$
      $f(t,\bx)$ is measurable in $t$ and there exist a
      function $m_{\bx}(t)$, which is Lebesgue integrable on
      every bounded subinterval, such that $\|f(t,x)\| \le$
      $m_{\bx}(t)$.
\end{enumerate}

A solution of differential equation \eqref{E-NDE} is  a
vector-valued  function $\bx(t)$ which is absolutely continuous on
some interval and satisfies \eqref{E-NDE} almost everywhere on this
interval. The existence of solutions of \eqref{E-NDE} on the whole
real axis $\mathbb{R}$  as well as their uniqueness follows from
the Carath\'{e}odory conditions, if, in addition, the  function
$f(t,\bx)$ is defined for all $(t,\bx) \in$
$\mathbb{R}^{1}\times\mathbb{R}^{N}$ and satisfies a local
Lipschitz condition in $\bx$ in a neighbourhood of every initial
point $(t_{0},\bx_{0})$.

A solution $\bx(t) = (x_{1}(t),x_{2}(t),\ldots,x_{N}(t))^\top$ of
\eqref{E-NDE} is called \emph{positive} (\emph{strongly positive})
if
\[
x_{i}(t)\ge 0~ (> 0)\quad\textrm{for all   }  ~t~\textrm{and}~i=1,2,\ldots,N.
\]

The next condition  guarantees the  positivity of solutions of
\eqref{E-NDE}, see, e.g.,
\cite{Szarski:65,Kras:OpTrans:e,Walter:91,Smith95}.  A  function
$f=(f_{1},f_{2},\ldots,f_{N})^\top
:\mathbb{R}^{1}\times\mathbb{R}^{N} \rightarrow \mathbb{R}^{N}$ is
called \emph{quasipositive}, or \emph{off-diagonal positive}, if,
for each $i = 1,2,\ldots,N$,
\[
f_{i}(t,x_{1},\ldots,x_{i-1},0,x_{i+1},\ldots,x_{N})\ge 0,
\]
whenever  $x_{j} \ge 0$ for $j \neq i$. A  function $f =$
$(f_{1},f_{2},\ldots,f_{N})^\top$ is called \emph{strongly
quasipositive}, or \emph{strongly off-diagonal positive}, if, for
each $i = 1,2,\ldots,N$,
\[
f_{i}(t,x_{1},\ldots,x_{i-1},0,x_{i+1},\ldots,x_{N})> 0,
\]
whenever $x_{j}\ge0$ for all $j$ and $\sum_{j}x_{j}>0$.

The proof of the following  theorem for the case that $f(t,\bx)$ is
continuous in $t$ and $\bx$  can be found, e.g., in
\cite[Lemma~4.1]{Kras:OpTrans:e}.  It was noted in
\cite{Szarski:65} that similar statements   are also valid when
$f(t,\bx)$ satisfies the Carath\'{e}odory conditions.

\begin{theorem}\label{T-diffpos} If the  vector-field   $f$
of \eqref{E-NDE} is quasipositive, then the solution $\bx(t)$ of
\eqref{E-NDE} satisfying the initial condition $\bx(0) = \bx_{0}=
(x_{0,1},x_{0,2},\ldots,x_{0,N})$ is positive for $t \ge 0$
whenever $x_{0,i} \ge 0$ for $i=1,2,\ldots,N$.

If  the vector-field   $f$ of \eqref{E-NDE} is  strongly
quasipositive, then   the solution $\bx(t)$ of \eqref{E-NDE}
satisfying the initial condition $\bx(0) = \bx_{0}
=(x_{0,1},x_{0,2},\ldots,x_{0,N})$ is strongly positive for $t$
$\ge 0$ whenever $x_{0,i} > 0$ for $i=1,2,\ldots,N$.
\end{theorem}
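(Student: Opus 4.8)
The plan is to prove the statement first for a vector-field that is jointly continuous in $(t,\bx)$, by a standard barrier argument, and then to pass to the Carath\'eodory case by approximation. Throughout I use that the local Lipschitz hypothesis guarantees existence, uniqueness and continuous dependence of solutions on both the initial data and on parameters appearing in the field.

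For the first (quasipositive) assertion with continuous $f$, fix $T>0$ and, for $\epsilon>0$, let $\bx^{\epsilon}$ solve the perturbed equation $\dot{\bx}=f(t,\bx)+\epsilon\,\boldsymbol{1}_N$ with $\bx^{\epsilon}(0)=\bx_0$. I claim $x^{\epsilon}_i(t)>0$ for all $i$ and all $t\in(0,T]$. Indeed, at any time $\tau$ at which $x^{\epsilon}_k(\tau)=0$ while $x^{\epsilon}_j(\tau)\ge 0$ for $j\ne k$, quasipositivity gives $f_k(\tau,\bx^{\epsilon}(\tau))\ge 0$, whence $\dot{x}^{\epsilon}_k(\tau)=f_k(\tau,\bx^{\epsilon}(\tau))+\epsilon\ge\epsilon>0$; thus no component can cross from positive to nonpositive values, and since each component is either positive at $t=0$ or has strictly positive derivative there, all components stay positive on $(0,T]$. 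Letting $\epsilon\to0$ and invoking continuous dependence yields $x_i(t)=\lim_{\epsilon\to0}x^{\epsilon}_i(t)\ge 0$, which is the positivity claim.

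For the second (strongly quasipositive) assertion it suffices, since strong quasipositivity implies quasipositivity and hence nonnegativity of the solution by the first part, to bound each component away from zero. Fix $i$ and let $P_i\bx$ denote $\bx$ with its $i$-th coordinate replaced by $0$. Along the nonnegative solution, quasipositivity gives $f_i(t,P_i\bx(t))\ge 0$, while the local Lipschitz property gives a locally integrable $L(t)$ with $|f_i(t,\bx(t))-f_i(t,P_i\bx(t))|\le L(t)\,x_i(t)$, so that $\dot{x}_i(t)=f_i(t,\bx(t))\ge -L(t)\,x_i(t)$ for almost every $t$. The scalar comparison (Gronwall) inequality then gives $x_i(t)\ge x_{0,i}\exp\!\big(-\int_0^t L(s)\,ds\big)>0$ for $t\ge 0$, which is strong positivity; the strong quasipositivity is what permits the analogous conclusion when some $x_{0,i}$ vanish, via the barrier argument above with the strict sign $f_i>0$ forcing immediate entry into the interior.

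The main obstacle is the transition from continuous to Carath\'eodory fields, since the pointwise derivative arguments above hold only almost everywhere. I would resolve this by mollifying $f$ in the $t$-variable with a nonnegative kernel, obtaining fields $f^{(n)}(t,\bx)$ that are continuous in $t$, inherit continuity and the local Lipschitz bound in $\bx$, and, crucially, remain quasipositive (respectively strongly quasipositive), because averaging the inequality $f_i(s,\cdot)|_{x_i=0}\ge 0$ against a nonnegative kernel preserves its sign. Applying the continuous-case result to each $f^{(n)}$ and passing to the limit through continuous dependence of Carath\'eodory solutions on the right-hand side delivers the theorem; the only points requiring care are verifying the uniform integrable bound needed for this limit and recasting the first-contact argument in its equivalent integral form so as to avoid appealing to derivatives at individual times.
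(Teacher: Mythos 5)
The paper does not actually prove Theorem~\ref{T-diffpos}; it defers to \cite[Lemma~4.1]{Kras:OpTrans:e} for the continuous case and to \cite{Szarski:65} for the Carath\'eodory extension. Your argument is therefore a reconstruction of the cited proofs rather than an alternative to anything in the paper, and it follows the classical route: the $\epsilon\,\boldsymbol{1}_N$-perturbation with a first-contact argument, a Gronwall lower bound for strict positivity, and mollification in $t$ to reach Carath\'eodory fields. The outline is sound, and what it buys is a self-contained proof where the paper offers only a citation. Three points deserve attention. First, for the second assertion exactly as stated (all $x_{0,i}>0$), your Gronwall estimate $x_i(t)\ge x_{0,i}\exp\bigl(-\int_0^t L(s)\,ds\bigr)$ works directly in the Carath\'eodory setting, since an absolutely continuous $x_i$ with $\dot x_i\ge -L\,x_i$ almost everywhere obeys that bound; no mollification is needed there, and only quasipositivity is actually used --- the strong hypothesis is needed only for the stronger conclusion (entry into the interior from boundary initial data) that the theorem does not claim. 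Second, strong quasipositivity as defined says nothing at $\bx=0$ (where $\sum_j x_j=0$), so the implication ``strongly quasipositive $\Rightarrow$ quasipositive'' that you invoke requires a word about continuity of $f(t,\cdot)$. Third, the genuinely unfinished step is the passage to the limit $\bx^{(n)}\to\bx$ for the mollified equations: you need $\int_0^t f^{(n)}(s,\by)\,ds\to\int_0^t f(s,\by)\,ds$ locally uniformly together with a locally integrable Lipschitz bound to run Gronwall on the difference of the integral equations. With the paper's local Lipschitz hypothesis this is standard, but it is precisely where the Carath\'eodory case differs from the continuous one, so it should be written out rather than only flagged.
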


\subsection{Deterministic linear differential equations}\label{D-dynamics}

Unfortunately  the requirement that a vector-field function be
quasipositive is rather restrictive and does not apply to a
deterministic linear  differential equation
\begin{equation}\label{DDE1}
\frac{d\bp}{dt} = Q \bp, \quad \bp \in \mathbb{R}^N
\end{equation}
with a tridiagonal  matrix $Q$ given by \eqref{E-defQ}.

There is, however,   another useful notion that can be used in
combination with it. An $N\times N$  matrix $Q$ with non-negative
off-diagonal elements is said to  have a \emph{path of
nonsingularity} \cite{Kras:OpTrans:e} if there is a set of indices
$i_{1},i_{2},\ldots, i_{n}$ with $i_{n} = i_{1}$ such that all the
elements $q_{i_{j},i_{j+1}}$ are strictly positive and this set of
indices contains all the numbers $1$, $2$, $\ldots$, $N$. In
particular,  the matrix $Q$ defined by \eqref{E-defQ} has the  path
of nonsingularity:
\[
\{i_{1},i_{2},\ldots, i_{2N-1}\}=\{1,2,\ldots,N-1,N,N-1,\ldots,2,1\}.
\]

Recall that a set $K$ in a Banach space is called a \emph{cone} if
it is convex, closed with  $tK \subseteq K$ for any real $t \ge 0$
and $K\cap -K = \{0\}$, see, e.g. \cite{KLS:PosLinSys:e}. Fix a
norm $\|\cdot\|$ in $\mathbb{R}^{N}$ and denote by $\coneN$ the
cone of elements $x = (x_{1},x_{2},\ldots,x_{N})^\top \in
\mathbb{R}^{N}$ with nonnegative components and by $\intconeN$ the
interior of $\coneN$, which is clearly non-empty. In addition,
denote by  $\mathbb{S}^{N}$ the unit ball in the norm $\|\cdot\|$.

\begin{theorem}\label{T-linDU}
Suppose that  the matrix $Q$ in \eqref{DDE1} with nonnegative
off-dia\-go\-nal entries has a path of nonsingularity. Then, for
any non-zero initial condition $\bp(0) = \bp_{0} \in \coneN$, the
solution $\bp(t,\bp_{0})$ is strongly positive for all  $t > 0$.

Moreover, for every  bounded interval $[T_{1},T_{2}] \subset$
$(0,\infty)$, there is a number $\alpha(T_{1},T_{2}) > 0$ such that
\begin{equation}\label{E-incone}
\bp(t;\bp_{0}) + \alpha(T_{1},T_{2})\|\bp_{0}\| \, \mathbb{S}^{N}
\subseteq \mathbb{K}^{N}_{+}
\end{equation}
 for all $\bp_{0} \in \coneN\setminus\{0\}$ and $t \in [T_{1},T_{2}]$.
 \end{theorem}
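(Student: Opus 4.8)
The plan is to work with the explicit fundamental matrix. Since $Q$ in \eqref{DDE1} is a constant matrix here, the solution is $\bp(t;\bp_{0}) = e^{tQ}\bp_{0}$, so the whole statement reduces to understanding the sign pattern of $e^{tQ}$. First I would shift $Q$ to a nonnegative matrix: choosing a scalar $c$ large enough that $M := cI_{N}+Q$ has nonnegative entries and strictly positive diagonal, the off-diagonal pattern of $M$ agrees with that of $Q$. Then $e^{tQ} = e^{-ct}\,e^{tM}$ with $e^{tM} = \sum_{k\ge 0} t^{k}M^{k}/k!$, a convergent sum of nonnegative matrices; in particular $e^{tQ}$ maps $\coneN$ into $\coneN$, which already recovers positivity of the solution (consistent with the quasipositivity of $\bp\mapsto Q\bp$ and Theorem~\ref{T-diffpos}).

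The crux is to upgrade this to \emph{strong} positivity for $t>0$, and this is exactly where the path of nonsingularity enters. Associate with $M$ the directed graph on $\{1,\dots,N\}$ having an edge $j\to i$ whenever $M_{ij}>0$, so that $(M^{k})_{ij}>0$ if and only if there is a walk of length $k$ from $j$ to $i$. The closed index sequence $i_{1},\dots,i_{n}=i_{1}$ defining the path of nonsingularity is a closed walk over strictly positive edges that meets every vertex, hence the graph is strongly connected; consequently, for every ordered pair $(i,j)$ there is some $k$ with $(M^{k})_{ij}>0$, which forces the $(i,j)$ entry of $e^{tM}=\sum_{k\ge0}t^{k}M^{k}/k!$ to be strictly positive for every $t>0$. (The strictly positive diagonal of $M$ moreover makes it primitive, so a single $k$ serves all pairs simultaneously.) Therefore $e^{tQ}$ is entrywise strictly positive for $t>0$, and $\bp(t;\bp_{0})=e^{tQ}\bp_{0}$ has all components strictly positive for any $\bp_{0}\in\coneN\setminus\{0\}$, which is the first assertion. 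An alternative in the spirit of Section~\ref{D-ineq} is to combine Theorem~\ref{T-diffpos} with a comparison argument propagating strict positivity of one coordinate to its neighbours along the path; the exponential computation is merely the cleanest realisation of the same mechanism.

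For the uniform inclusion \eqref{E-incone} I would exploit homogeneity. By linearity $\bp(t;\lambda\bp_{0})=\lambda\,\bp(t;\bp_{0})$ and $\|\lambda\bp_{0}\|=\lambda\|\bp_{0}\|$ for $\lambda>0$, so the claimed inclusion is invariant under positive rescaling of $\bp_{0}$; it thus suffices to prove it on the compact slice $\Gamma:=\{\bp_{0}\in\coneN:\|\bp_{0}\|=1\}$ with $\alpha\|\bp_{0}\|$ replaced by $\alpha$. Introduce the distance-to-boundary function $d(\bx):=\dist(\bx,\mathbb{R}^{N}\setminus\coneN)$, which is continuous (indeed $1$-Lipschitz), satisfies $\bx+r\,\mathbb{S}^{N}\subseteq\coneN$ exactly when $r\le d(\bx)$, and is strictly positive precisely on $\intconeN$. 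The map $(t,\bp_{0})\mapsto d(e^{tQ}\bp_{0})$ is continuous on the compact product $[T_{1},T_{2}]\times\Gamma$, and by the first part it is strictly positive there, since $T_{1}>0$ places $\bp(t;\bp_{0})$ in $\intconeN$. Hence it attains a strictly positive minimum $\alpha(T_{1},T_{2})>0$, giving $\bp(t;\bp_{0})+\alpha(T_{1},T_{2})\,\mathbb{S}^{N}\subseteq\coneN$ on $\Gamma$; rescaling by $\|\bp_{0}\|$ then yields \eqref{E-incone} in general.

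The main obstacle is precisely this \emph{uniformity} in \eqref{E-incone}: strong positivity by itself only guarantees a positive distance to $\partial\coneN$ pointwise, and one must prevent this distance from degenerating as $\bp_{0}$ ranges over the noncompact set $\coneN\setminus\{0\}$ and $t$ over $[T_{1},T_{2}]$. The homogeneity reduction to the compact set $\Gamma$, combined with continuous dependence of $e^{tQ}\bp_{0}$ on $(t,\bp_{0})$, is what converts the pointwise positivity into a uniform bound; the hypothesis $T_{1}>0$ is essential, since boundary initial data at $t=0$ give $d=0$.
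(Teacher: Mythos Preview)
Your argument is correct. The paper's own proof is essentially a citation: the first assertion is delegated to \cite[Theorem~4.7]{Kras:OpTrans:e}, and the second is declared ``clear modulo the first part.'' You instead give a self-contained elementary proof. For strong positivity, your shift $M=cI_{N}+Q$ and the graph-theoretic reading of the path of nonsingularity make the mechanism explicit: the closed walk through every vertex forces strong connectivity, so every entry of $e^{tM}$ (hence of $e^{tQ}$) is strictly positive for $t>0$. This is presumably close in spirit to what Krasnosel{$'$}ski{\u\i}'s theorem does, but your version avoids the black box. For the uniform inclusion \eqref{E-incone}, your homogeneity reduction to the compact slice $\Gamma=\coneN\cap\{\|\bp_{0}\|=1\}$ followed by the extreme value theorem applied to $(t,\bp_{0})\mapsto\dist(e^{tQ}\bp_{0},\partial\coneN)$ on $[T_{1},T_{2}]\times\Gamma$ is exactly the compactness argument the paper leaves implicit in the word ``clear''; your write-up has the virtue of isolating why $T_{1}>0$ is needed. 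The net effect is the same conclusion, but your route is more transparent and does not rely on the external reference.
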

\begin{proof}
The first part of Theorem~\ref{T-linDU} follows from
\cite[Theorem~4.7]{Kras:OpTrans:e}, while  the second part is clear
modulo the first part.
\end{proof}

\begin{remark}\label{R-incone}
The inclusion \eqref{E-incone} means that the solution operator
$\bp_{0} \mapsto \bp(t,\bp_{0})$ maps the set
$\coneN\setminus\{0\}$ into its interior $\intconeN$ for every $t$
$> 0$.
\end{remark}

\begin{remark}\label{R-incone2}
If the matrix $Q$ in \eqref{DDE1} satisfies the conditions of
Theorem~\ref{T-linDU} as well as  $\boldsymbol{1}_N^T Q =
\boldsymbol{0}$,  then $\bp(t;\Sigma_N) \subseteq \Sigma_N$ for all
$t \geq 0$. Moreover,  the simplex $\Sigma_N$ is mapped by
$\bp(t;\cdot)$  into its interior $\intsimpN$ for every $t > 0$.
\end{remark}

\subsection{Random linear differential equations}\label{D-rand}

Let $(\Omega,\mathcal{F},\mathbb{P})$ be a probability space. In
the theory of random dynamical systems
\cite{ArnoldL:98,Chueshov:02} the noise process that drives the
dynamics is given in terms of   an abstract  metric dynamical
system.

\begin{definition}\label{Def-MDS}
A metric dynamical system (MDS) $\theta \equiv$
$\left\{\theta_{t}\right\}_{t\in\mathbb{R}}$ on a probability space
$(\Omega,\mathcal{F},\mathbb{P})$ is  a family of transformations
$\theta_{t} : \Omega \rightarrow \Omega$, $t \in \mathbb{R}$, such
that
\begin{enumerate}[\rm 1.]
\item it is one-parameter group, i.e.,
\[
\theta_{0} = \mbox{id}_{\Omega}, \quad \theta_{t}\circ\theta_{s}  = \theta_{t+s}
\quad\textrm{for all}\quad t, s\in\mathbb{R};
\]

\item $(t,\omega)\mapsto \theta_{t}\omega$ is jointly
    smeasurable;

\item $\theta_{t}\mathbb{P} = \mathbb{P}$ for all $t \in$
    $\mathbb{R}$, i.e., $\mathbb{P}(\theta_{t}B) =$
    $\mathbb{P}(B)$ for all $B \in \mathcal{F}$ and $t \in$
    $\mathbb{R}$.
\end{enumerate}
\end{definition}

Consider the random linear differential equation
\cite{ArnoldL:98,Chueshov:02}
\begin{equation}\label{RDE}
\frac{d\bp}{dt} = Q(\theta_{t}\omega) \bp,\quad
\bp(0)=\bp_{0}\in\mathbb{R}^{N},
\end{equation}
driven by the metric dynamical system $\theta$  on a probability
space
 $(\Omega,\mathcal{F},\mathbb{P})$, where the coefficients $q_{j} :$
$\Omega \to \mathbb{R}$ in the $N \times
 N$-matrix $Q$   are locally
bounded $\mathcal{F}$-measurable functions.

More precisely,   assume that there exists a random variable $C$
with $C(\omega) > 0$   such that
\begin{equation}\label{E-C1}
\int_{\alpha}^{\alpha+1}C(\theta_{t}\omega)\,dt  < \infty
\quad\textrm{for all}\quad  \alpha\in\mathbb{R},
\end{equation}
and
\begin{equation}\label{E-C2}
|q_{i}(\omega)| \le C(\omega) \quad\textrm{for}\quad  i =1,2,\dots,2N-2
\end{equation}
for all $\omega \in \Omega$.  It is to be  emphasized that
assumptions \eqref{E-C1} and \eqref{E-C2} are stated here for all
$\omega \in \Omega$. As explained in \cite[p.~56]{Chueshov:02} this
does not restrict generality.

\begin{remark}\label{R-1}
Conditions \eqref{E-C1} and \eqref{E-C2} are satisfied if
\[
|q_{i}(\omega)|\le C_{*} \quad\textrm{for all} ~ i ~\textrm{and}~\omega\in\Omega
\]
for some constant $C_{*} > 0$.
\end{remark}

The  random differential equations \eqref{RDE} is thus a
Carath\'eodory ordinary differential equation for each sample path.
For each $\omega \in \Omega$ the vector-field mapping $f(t,\bp) =
Q(\theta_{t}\omega) \bp$ satisfies the Carath\'eodory conditions
\eqref{E-C1} and \eqref{E-C2} and is linear and thus globally
Lipschitz in $\bp$. It follows that  the initial value problem
\eqref{RDE} has a unique global solution $\bp(t,\omega)\equiv
\bp(t,\omega;\bp_{0})$ for any initial value
$\bp_{0}\in\mathbb{R}^{N}$, where this solution exists for all $t$
$\geq 0$ (see \cite{Chueshov:02} for more details explicitly in
terms of the random differential equations). Moreover, this
solution depends continuously on $\bp_{0}$ for each
$\omega\in\Omega$.

It will now be shown that  RDE \eqref{RDE}  preserves  the
positivity of  initial values.

\subsection{Positivity of solutions}\label{D-rancos}

The results from the previous sections will now be applied to the
random linear differential equation (RDE) in $\mathbb{R}^{N}$
\begin{equation}\label{RDE1}
\frac{d\bp}{dt} = Q(\theta_{t}\omega) \bp,\quad
\bp(0)=\bp_{0}\in\mathbb{R}^{N},
\end{equation}
driven by a metric dynamical system $\theta$, where the matrix
$Q(\omega)$ is of the tridiagonal form \eqref{E-defQ} and satisfies
\begin{equation}\label{E-oneQ2zero2}
\boldsymbol{1}_N^T Q(\omega) \equiv \boldsymbol{0} \quad \mbox{for all  } \omega\in\Omega.
\end{equation}

Suppose further that the coefficients in the matrix $Q(\omega)$ are
locally bounded $\mathcal{F}$-measurable functions satisfying
uniform   bounds
\begin{equation}\label{E-qbound}
0<c_{*}\le q_{i}(\omega)\le C_{*}<\infty,\quad i=1,2,\ldots,2N-2,~\omega\in\Omega.
\end{equation}
Then, by Remark~\ref{R-1},  $Q(\omega)$ satisfies the  conditions
\eqref{E-C1} and  \eqref{E-C2}.

Define (a deterministic) tridiagonal matrix
\[
\Bar{Q} = \left[\begin{array}{ccccccc}
\Tilde{q}_1 & \Bar{q}_2 & & & & & \bigcirc\\
\Bar{q}_1 & \Tilde{q}_2 & \Bar{q}_4 & & & & \\
 & \ddots & \ddots & \ddots & \ddots & \ddots & \\
 & & & & \Bar{q}_{2N-5} & \Tilde{q}_{N-1} & \Bar{q}_{2N-2} \\
\bigcirc & & & & & \Bar{q}_{2N-3} & \Tilde{q}_{N}
\end{array}\right]
\]
with the off-diagonal elements $\Bar{q}_{i}$
\[
\Bar{q}_{i}=\inf_{\omega\in\Omega} q_{i}(\omega),\quad i=1,2,\ldots,2N-2,
\]
and the diagonal elements $\Tilde{q}_{i}$ satisfying
\begin{alignat*}{3}
\Tilde{q}_{1}&=\inf_{\omega\in\Omega} \{-q_{1}(\omega)\},\\
\Tilde{q}_{i}&=\inf_{\omega\in\Omega} \{-(q_{2i-2}(\omega)+q_{2i-1}(\omega))\},\quad &i&=2,\ldots,2N-1,\\
\Tilde{q}_{N}&=\inf_{\omega\in\Omega} \{-q_{2N-2}(\omega)\}.
\end{alignat*}
Then
\[
Q(\omega)\ge \Bar{Q},\quad \mbox{for all  } \omega\in\Omega,
\]
where the inequality between the matrices is interpreted
componentwise. The off-diagonal elements  $\Tilde{q}_{i}$ of the
matrix $\Bar{Q}$ are  strictly positive since
\begin{equation}\label{E-qbarpos}
0 < c_{*}\le \Bar{q}_{i},\quad i=1,2,\ldots,2N-2.
\end{equation}

Consider   the differential equation
\begin{equation}\label{E-QbarDU}
\frac{d\bq}{dt} = \Bar{Q}\bq
\end{equation}
and denote  the solution of this equation with  the initial
condition $\bq(0) = \bq_{0}\in\mathbb{R}^{N}$ by $\bq(t;\bq_{0})$.

\begin{remark}\label{R-bpprop}
 It follows from \eqref{E-qbarpos}  that $\Bar{Q}$ is a matrix with
non-negative off-diagonal elements that has  a path of
nonsingularity. Then by Theorem~\ref{T-linDU} and
Remark~\ref{R-incone} the solution operator $\bq(t;\cdot)$  maps
the set $\coneN\setminus\{0\}$ into its interior $\intconeN$ for
each $t > 0$ maps. Moreover,  relations of the type
\eqref{E-incone} hold for it with appropriate parameters.
\end{remark}

\begin{remark}\label{R-no-inc-simplex}
The relation $\boldsymbol{1}_N^T \Bar{Q} \equiv \boldsymbol{0}$ is
not valid in general. Then the simplex $\Sigma_N$ will not be
  invariant under $\bq(t;\cdot)$, but this will be of no
importance in what follows.
\end{remark}

\begin{theorem}\label{T-main}
Let $\theta$  a metric dynamical system and  let  $Q(\omega)$ be a
matrix  of the tridiagonal form \eqref{E-defQ}, which satisfies
\eqref{E-oneQ2zero2} and \eqref{E-qbound}. Then, the solution
$\bp(t;\omega,\bp_{0})$ of   the random linear differential
equation \eqref{RDE1} satisfies
\begin{equation}\label{E-mainineq}
\bp(t;\omega,\bp_{0})\ge \bq(t;\bp_{0}) \quad \mbox{for all  } \omega\in\Omega,~ t \geq 0,
\bp_{0}\in\coneN,
\end{equation}
where the inequality is meant componentwise.
\end{theorem}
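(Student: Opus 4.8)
The plan is to prove the componentwise inequality \eqref{E-mainineq} by a comparison argument between the random differential equation \eqref{RDE1} and the deterministic autonomous comparison equation \eqref{E-QbarDU}. The two systems share the same initial condition $\bp_0 \in \coneN$, and by construction $Q(\theta_t\omega) \ge \Bar{Q}$ componentwise for every $\omega \in \Omega$ and every $t$. The natural tool is a comparison theorem for Carath\'eodory systems with quasimonotone (quasipositive) right-hand sides, in the spirit of Theorem~\ref{T-diffpos} and the references \cite{Szarski:65,Walter:91,Smith95}.

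First I would verify that the comparison vector-field is quasimonotone in the sense required. Write $\bp(t) = \bp(t;\omega,\bp_0)$ and $\bq(t) = \bq(t;\bp_0)$, and set the difference $\bw(t) = \bp(t) - \bq(t)$, so that $\bw(0) = \boldsymbol{0}$. Differentiating,
\begin{equation*}
\frac{d\bw}{dt} = Q(\theta_t\omega)\bp(t) - \Bar{Q}\bq(t)
= \Bar{Q}\,\bw(t) + \bigl(Q(\theta_t\omega) - \Bar{Q}\bigr)\bp(t).
\end{equation*}
The key structural observations are: (i) $\Bar{Q}$ has nonnegative off-diagonal entries by \eqref{E-qbarpos}, so the linear map $\bw \mapsto \Bar{Q}\bw$ is quasimonotone; and (ii) the ``forcing'' term $g(t) := (Q(\theta_t\omega) - \Bar{Q})\bp(t)$ is componentwise nonnegative \emph{provided} $\bp(t)$ is itself nonnegative, since $Q(\theta_t\omega) - \Bar{Q} \ge 0$ componentwise and a nonnegative matrix applied to a nonnegative vector is nonnegative. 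This is where I would first need that $\bp(t) \in \coneN$ for $t \ge 0$, which follows because $Q(\theta_t\omega)$ is itself quasipositive (its off-diagonal entries $q_i(\omega)$ are positive by \eqref{E-qbound}) and hence Theorem~\ref{T-diffpos} applied sample-path-wise guarantees $\bp(t) \ge \boldsymbol{0}$ for $t \ge 0$.

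With $\bp(t) \ge \boldsymbol{0}$ in hand, the forcing $g(t)$ is nonnegative, and $\bw$ solves a linear Carath\'eodory system $\dot{\bw} = \Bar{Q}\bw + g(t)$ with quasimonotone drift, nonnegative forcing, and zero initial value. The comparison principle (equivalently, the variation-of-constants representation $\bw(t) = \int_0^t e^{(t-s)\Bar{Q}} g(s)\,ds$ together with the fact that $e^{s\Bar{Q}}$ is a nonnegative matrix for $s \ge 0$, itself a consequence of $\Bar{Q}$ having nonnegative off-diagonals) then yields $\bw(t) \ge \boldsymbol{0}$, i.e.\ $\bp(t;\omega,\bp_0) \ge \bq(t;\bp_0)$ for all $t \ge 0$. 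Since nothing in the argument used probabilistic structure and all bounds hold for every $\omega \in \Omega$, the inequality holds for all $\omega$ simultaneously.

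The main obstacle I anticipate is justifying the comparison step rigorously in the Carath\'eodory (merely absolutely continuous, a.e.-differentiable) setting rather than the classical $C^1$ setting, since the right-hand side is only measurable in $t$. I would handle this either by invoking the Carath\'eodory-version comparison theorem noted after Theorem~\ref{T-diffpos} (attributed to \cite{Szarski:65}), or, more cleanly, by exploiting linearity: the explicit variation-of-constants formula for $\bw$ is valid for Carath\'eodory linear systems, and the nonnegativity of $e^{s\Bar{Q}}$ is a purely algebraic fact about matrices with nonnegative off-diagonal entries (essentially nonnegative / Metzler matrices). This reduces the whole argument to the two clean facts—$\bp(t)\ge\boldsymbol{0}$ and $e^{s\Bar{Q}}\ge 0$—and sidesteps the delicate a.e.\ differentiation of the sign of $\bw$.
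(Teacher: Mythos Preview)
Your argument is correct, but it uses a different algebraic splitting of the difference equation than the paper does. The paper writes the difference $\bx = \bp - \bq$ as satisfying
\[
\frac{d\bx}{dt} = Q(\theta_t\omega)\,\bx + \bigl(Q(\theta_t\omega) - \Bar{Q}\bigr)\bq,
\]
so the drift is the \emph{random} matrix $Q(\theta_t\omega)$ and the forcing involves the \emph{deterministic} comparison solution $\bq$; nonnegativity of the forcing then comes immediately from $\bq(t;\bp_0)\ge 0$ (Remark~\ref{R-bpprop}), and a single appeal to Theorem~\ref{T-diffpos} (using that $Q(\theta_t\omega)$ has nonnegative off-diagonals) finishes the proof. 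Your decomposition $\dot{\bw} = \Bar{Q}\,\bw + (Q(\theta_t\omega)-\Bar{Q})\bp$ swaps the roles: the drift is the \emph{autonomous} matrix $\Bar{Q}$, which lets you write down the clean variation-of-constants integral with the Metzler exponential $e^{s\Bar{Q}}\ge 0$, but at the price of first having to establish $\bp(t)\ge 0$ as a separate preliminary step. Both routes are sound; the paper's is marginally more economical (one positivity argument rather than two), while yours makes the mechanism more explicit by reducing the conclusion to a visibly nonnegative integral and thereby sidestepping any subtlety about comparison theorems in the Carath\'eodory setting.
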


\begin{proof}
Fix an $\omega\in\Omega$ and fix the initial conditions $\bp(0) =
\bq(0) = \bp_{0} \in \coneN$ for the differential equations
\eqref{RDE1} and \eqref{E-QbarDU}, respectively. Then the function
\[
\bx(t):=\bp(t;\omega,\bp_{0})-\bq(t;\bp_{0})
\]
satisfies the differential equation
\[
\frac{d\bx}{dt} = Q(\theta_{t}\omega) \bp-\Bar{Q}\bq=Q(\theta_{t}\omega) \bx+\left(Q(\theta_{t}\omega)-\Bar{Q}\right)\bq
\]
with the initial condition $\bx(0)=0$. Denoting
\[
\bw(t)
:=\left(Q(\theta_{t}\omega)-\Bar{Q}\right)\bq(t;\bp_{0}),
\]
this differential equation can be written as
\begin{equation}\label{E-DUx}
\frac{d\bx}{dt} = Q(\theta_{t}\omega)
\bx+\bw(t),\quad \bx(0)=0.
\end{equation}

The matrix $\left(Q(\theta_{t}\omega)-\Bar{Q}\right)$ has,  by
definition,   non-negative components for every $t$ and $\omega$.
By Remark~\ref{R-bpprop} the  function $\bq(t;\bp_{0})$ with
$\bp_{0}\in\coneN$ has non-negative components for all $t\ge0$.
Hence  the function $\bw(t)$ has  non-negative components.
Moreover, the matrix  $Q(\omega)$ as a matrix with non-negative
off-diagonal components has a path of nonsingularity, so it follows
that the differential equation \eqref{E-DUx} satisfies all the
conditions of Theorem~\ref{T-diffpos}. Hence, the solution function
$\bx(t)$ is positive, i.e., its components are non-negative.
\end{proof}

Theorem~\ref{T-main} has some important  consequences.

\begin{corollary}\label{C1} There following statements are valid:
\begin{enumerate}[\rm(i)]

\item for any $\omega\in\Omega$ and $t>0$, the solution
    operator $\bp(t;\omega,\cdot)$ maps the set
    $\coneN\setminus\{0\}$ into its interior $\intconeN$;

\item for any $\omega\in\Omega$ and $t>0$, the solution
    operator $\bp(t;\omega,\cdot)$ maps the simplex $\Sigma_N$
    into its interior $\intsimpN$;

\item for any bounded interval $[T_{1},T_{2}]\subset(0,\infty)$
    there is a number $\beta(T_{1},T_{2})$ $>$ $0$ such that
    \[
    \bp(t;\omega,\bp_{0})+\beta(T_{1},T_{2})\|\bp_{0}\| \, \left(\mathbb{S}^{N}\cap\Sigma_N\right)
    \subseteq\Sigma_N,
    \]
for all $\omega\in\Omega$, $\bp_{0}\in\coneN\setminus\{0\}$,
$t\in[T_{1},T_{2}]$.
\end{enumerate}
\end{corollary}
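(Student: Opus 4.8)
The plan is to deduce all three assertions from the comparison inequality \eqref{E-mainineq} of Theorem~\ref{T-main}, which bounds the random solution from below, componentwise, by the solution $\bq(t;\bp_{0})$ of the deterministic equation \eqref{E-QbarDU}. The latter already enjoys the strong positivity and incone properties collected in Remark~\ref{R-bpprop}, and the point is that these transfer to $\bp(t;\omega,\cdot)$ precisely because $\bp(t;\omega,\bp_{0})\ge\bq(t;\bp_{0})$ holds for \emph{every} $\omega\in\Omega$ while the comparison system \eqref{E-QbarDU} is itself independent of $\omega$. Thus the only substantive step is (i); statements (ii) and (iii) then follow by combining (i) with the invariance of $\Sigma_N$ and with the quantitative estimate produced along the way.

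For (i), I would fix $\omega\in\Omega$, $t>0$ and $\bp_{0}\in\coneN\setminus\{0\}$ and invoke Remark~\ref{R-bpprop}: by Theorem~\ref{T-linDU} and Remark~\ref{R-incone} the point $\bq(t;\bp_{0})$ lies in $\intconeN$, and an inclusion of the type \eqref{E-incone} holds, say $\bq(t;\bp_{0})+\alpha\|\bp_{0}\|\,\mathbb{S}^{N}\subseteq\coneN$ for a suitable $\alpha>0$. Since $\bp(t;\omega,\bp_{0})\ge\bq(t;\bp_{0})$ componentwise, for each $v\in\mathbb{S}^{N}$ one has $\bp(t;\omega,\bp_{0})+\alpha\|\bp_{0}\|v\ge\bq(t;\bp_{0})+\alpha\|\bp_{0}\|v\ge 0$ componentwise, so that $\bp(t;\omega,\bp_{0})+\alpha\|\bp_{0}\|\,\mathbb{S}^{N}\subseteq\coneN$ as well. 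This is exactly the assertion that $\bp(t;\omega,\bp_{0})\in\intconeN$, which proves (i); it moreover yields a cone-level quantitative bound whose constant $\alpha$ is uniform in $\omega$, since it originates from the $\omega$-independent equation \eqref{E-QbarDU}.

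For (ii), I would specialise to $\bp_{0}\in\Sigma_N\subseteq\coneN\setminus\{0\}$, so that (i) already makes every component of $\bp(t;\omega,\bp_{0})$ strictly positive. It then suffices to check conservation of total mass: from \eqref{E-oneQ2zero2} one gets $\frac{d}{dt}\,\boldsymbol{1}_N^{T}\bp=\boldsymbol{1}_N^{T}Q(\theta_{t}\omega)\bp=0$, hence $\boldsymbol{1}_N^{T}\bp(t;\omega,\bp_{0})=\boldsymbol{1}_N^{T}\bp_{0}=1$ for all $t\ge0$, so $\Sigma_N$ is forward invariant. A point of $\Sigma_N$ with all coordinates strictly positive lies in $\intsimpN$, giving (ii).

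For (iii), the idea is to restrict the perturbations admitted by the cone-level bound of step (i) to directions compatible with the affine hull of the simplex. Fixing $[T_{1},T_{2}]$, that bound holds with a single $\alpha=\alpha(T_{1},T_{2})>0$ uniformly over $\omega\in\Omega$, $t\in[T_{1},T_{2}]$ and $\bp_{0}\in\coneN\setminus\{0\}$; I would set $\beta(T_{1},T_{2})=\alpha(T_{1},T_{2})$. For a perturbation $w$ that preserves the coordinate sum, i.e.\ $\boldsymbol{1}_N^{T}w=0$, one has $\boldsymbol{1}_N^{T}\bigl(\bp(t;\omega,\bp_{0})+\beta\|\bp_{0}\|w\bigr)=\boldsymbol{1}_N^{T}\bp(t;\omega,\bp_{0})$, while the cone estimate keeps all coordinates non-negative, so the perturbed point stays in $\Sigma_N$. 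I expect the main obstacle to be bookkeeping rather than analysis: one must ensure that the perturbation set in the displayed inclusion consists of simplex-tangent directions, and one must track that $\beta$ is genuinely independent of $\omega$ --- which is automatic here because it descends from the deterministic comparison system, the same feature that made the transfer in (i) possible.
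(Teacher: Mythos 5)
Your proposal is correct and follows essentially the same route as the paper's own (very terse) proof: assertion (i) from the comparison inequality \eqref{E-mainineq} together with Remark~\ref{R-incone} and the inclusion \eqref{E-incone} for the deterministic operator $\bq(t;\cdot)$, assertion (ii) by adding mass conservation from \eqref{E-oneQ2zero2}, and assertion (iii) by combining the two. Your explicit remark that the perturbations in (iii) must be sum-zero (simplex-tangent) directions for the perturbed point to remain in $\Sigma_N$ is the right reading of the displayed inclusion and is a detail the paper's one-line argument glosses over.
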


To prove Corollary~\ref{C1} it suffices to note that Assertion (i)
follows immediately  from inequality \eqref{E-mainineq} and
Remark~\ref{R-incone}. This, due to \eqref{E-oneQ2zero2}, implies
Assertion (ii). Both these Assertions together with the inclusion
\eqref{E-incone} valid for the solution operator $\bq(t;\cdot)$
imply Assertion (iii).

\section{Random dynamical systems}\label{S-random}

Let $(\Omega,\mathcal{F},\mathbb{P})$ be a probability space. A
random dynamical system \cite{ArnoldL:98,Chueshov:02} has a skew
product structure consisting of a  metric dynamical system
\eqref{Def-MDS} that models the driving noise process and a cocycle
mapping.

Let $M$ be a complete metric space equipped with the metric $\rho$
and let $\theta$ be a metric dynamical system.

\begin{definition}\label{D-cocycle}
A map $\varphi : \mathbb{R}^+\times\Omega\times M \to M$ is called
a cocycle on $M$ with respect to the driving system $\theta$ if it
satisfies the \emph{initial condition}
\[
\varphi(0,\omega,\cdot)= \mbox{id}_X \quad \mbox{for all  } \omega\in\Omega,
\]
and the \emph{cocycle property}
\[
\varphi(t+s,\omega,x) =
\varphi(s,\theta_{t}\omega,\varphi(t,\omega,x)) \quad \mbox{for all  }
x\in M,~\omega\in\Omega,~ t,s\in\mathbb{R}^+.
\]
\end{definition}

The RDE \eqref{RDE} generates a random dynamical system
$(\theta,\varphi)$ on $\mathbb{R}^{N}$ with the cocycle mapping
$\varphi$ defined by
\[
\varphi(t, \omega, \bp_{0}) = \bp(t, \omega; \bp_{0})\quad \mbox{for all  }
t \geq 0,~\omega\in\Omega,~\bp_{0}\in\mathbb{R}^{N}.
\]
In fact,  the mapping $(t, \bp) \mapsto \varphi(t,\omega, \bp)$ is
continuous for each $\omega\in\Omega$, while the mapping $\bp$
$\mapsto \varphi(t,\omega, \bp)$ is linear.

\subsection{Random attractors}\label{S-randatt}

The asymptotic behaviour of a random dynamical system is described
by random attractors.
 \begin{definition}\label{ranattr}
A random attractor $\mathscr{A} = \{A(\omega), \omega\in\Omega\}$
of a random dynamical system $(\theta,\varphi)$ is a family of
nonempty compact subsets of $M$  such that
\begin{enumerate}[\rm 1.]
 \item the setvalued mapping $ \omega \mapsto A(\omega)$ is
     $\mathcal{F}$--measurable.

\item  $\mathscr{A}$ is $\varphi$-invariant, i.e.,
\[
\varphi\left(t,\omega,A(\omega)\right) = A(\theta_t \omega)
\]
for every $\omega \in \Omega$ and $t \in \mathbb{R}^+$,

\item  $\mathscr{A}$ pullback attracts
    $\mathcal{F}$--measurable families  $\mathscr{D} =$
    $\{D(\omega), \omega\in\Omega\}$  of  nonempty compact
    subsets of $M$, i.e.,
\[
\mbox{\rm dist} \left(\varphi\left(t,\theta_{-t} \omega, D\left(\theta_{-t} \omega\right)\right), A(\omega)\right) \longrightarrow 0 \quad \mbox{as  }  t \to 0
\]
for every $\omega \in \Omega$
\end{enumerate}
\end{definition}

It follows from Theorem~\ref{T-main} and Corollary~\ref{C1}  that
the solution operator $\bp(t;\omega,\cdot)$ of the  RDE \eqref{RDE}
maps the simplex $\Sigma_N$ into its interior $\intsimpN$
uniformly for  $t$ in bounded intervals from $(0,\infty)$.

The \emph{Hilbert projective metric} $\rho_{H}$ on $\coneN$ will be
used. It can be  defined as
\[
\rho_{H}(x,y)=\left|\ln\left(\frac{\max_{i}y_{i}/x_{i}}{\max_{i}x_{i}/y_{i}}\right)\right|
\]
for vectors $x = (x_{1},x_{2},\ldots,x_{N})^\top$ and $y =$
$(y_{1},y_{2},\ldots,y_{N})^\top$ in $\coneN$. It is fact only a
semi-metric on  $\coneN$, but becomes a metric on a projective
space. Important here is that the interior $\intsimpN$ of the
probability simplex $\simpN$ is the complete metric space with the
Hilbert projective metric, see, e.g.,
\cite{KLS:PosLinSys:e,Nuss:SIAMJMA90,Nuss:DIE94} and references
therein on the properties of the Hilbert projective metric. In the
Russian literature the Hilbert projective metric is also called the
Birkhoff metric.

The main result of this paper, Theorem \ref{T-attr} below, requires
the following definitions.

\begin{definition}
A cocycle mapping $\varphi : \mathbb{R}^{+}\times\Omega\times M$
$\to M$, where $(M,\rho)$ is a metric space,  is called
\emph{uniformly dissipative} if there exist a number $T_{d} \in$
$\mathbb{R}^{+}$ and a closed bounded set $M_{0} \subset M$ such
that
\[
\varphi(T_{d},\omega,M)\subseteq M_{0},\quad \forall~ \omega\in\Omega.
\]
It  is called \emph{uniformly contractive} if there exist a number
$T_{c} \in \mathbb{R}^{+}$ and a number $\lambda \in [0,1)$ such
that
\[
\rho(\varphi(T_{c},\omega,x),\varphi(T_{c},\omega,y))\le\lambda \rho(x,y),\quad
\forall~ \omega\in\Omega, ~x,y\in M.
\]
\end{definition}

\begin{theorem}\label{T-attr}
The restriction of $\bp(t;\omega,\cdot)$ to the set $\simpN$ is a
uniformly dissipative and uniformly contractive cocycle (with
respect to the Hilbert projective metric), which has a random
attractor $\mathscr{A} = \{A(\omega), \omega\in\Omega\}$ such that
set $A(\omega) = \{a(\omega)\}$ consists of a single point for each
$\omega \in \Omega$. Moreover, the random attractor is
asymptotically stable, i.e.,
\[
\left\|\bp(t,\omega,\bp_0) - a(\theta_t\omega)\right\| \to 0 \quad \textrm{as}\quad t \to \infty
\]
for all $\bp_0 \in \intsimpN$ and $\omega \in \Omega$.
\end{theorem}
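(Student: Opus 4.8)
The plan is to verify the two structural properties separately and then feed them into the standard contraction mechanism to produce a singleton pullback attractor. For \emph{uniform dissipativity} I would fix any time $T>0$ and apply Corollary~\ref{C1}(iii) on the interval $[T,T]$. Every $\bp_{0}\in\simpN$ has $\|\bp_{0}\|$ bounded below by a constant $c_{0}>0$ (all norms are equivalent and $\simpN$ is compact and avoids $0$), so the inclusion in Corollary~\ref{C1}(iii) shows that $\bp(T;\omega,\simpN)$ stays the fixed positive distance $\beta(T,T)c_{0}$ away from $\partial\simpN$, uniformly in $\omega$. Taking $M_{0}=\{\bp\in\simpN:\dist(\bp,\partial\simpN)\ge \beta(T,T)c_{0}\}$, a compact subset of $\intsimpN$, yields $\bp(T;\omega,\simpN)\subseteq M_{0}$ for all $\omega$, i.e.\ uniform dissipativity with $T_{d}=T$.

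The crux is \emph{uniform contractivity}, which I would obtain from the Birkhoff--Hopf theorem: a linear map carrying $\coneN\setminus\{0\}$ into $\intconeN$ contracts $\rho_{H}$ with ratio $\tanh(\Delta/4)<1$, where $\Delta$ is the projective diameter of its image \cite{KLS:PosLinSys:e,Nuss:SIAMJMA90}. For fixed $T>0$ the map $A_{\omega}:=\bp(T;\omega,\cdot)$ is linear and, by Corollary~\ref{C1}(i), sends $\coneN\setminus\{0\}$ into $\intconeN$; it remains to bound $\Delta(A_{\omega})$ \emph{uniformly in} $\omega$. The componentwise comparison \eqref{E-mainineq} of Theorem~\ref{T-main} together with the strong-positivity estimate \eqref{E-incone} for $\bq(T;\cdot)$ gives $\bp(T;\omega,\bp_{0})\ge\bq(T;\bp_{0})$ with every component of the right-hand side at least a fixed positive multiple of $\|\bp_{0}\|$, while a Gronwall estimate using the uniform coefficient bound \eqref{E-qbound} gives $\|\bp(T;\omega,\bp_{0})\|\le e^{\kappa T}\|\bp_{0}\|$, hence a matching componentwise upper bound. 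Since $\rho_{H}$ is scale invariant, these two-sided bounds confine the coordinate ratios $u_{i}/v_{i}$ of any two images $u=A_{\omega}x$, $v=A_{\omega}y$ to a fixed compact subinterval of $(0,\infty)$, so $\rho_{H}(u,v)\le\bar\Delta$ independently of $x,y,\omega$. Thus $\Delta(A_{\omega})\le\bar\Delta<\infty$ and $\lambda:=\tanh(\bar\Delta/4)<1$ works for all $\omega$, giving uniform contractivity with $T_{c}=T$. This uniform-in-$\omega$ control of the projective diameter is the main obstacle, and it is exactly what \eqref{E-mainineq} and \eqref{E-qbound} are designed to supply.

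With $T_{c}=T_{d}=T$ I would build the attractor by the contraction argument. Fixing a reference point $x_{0}\in M_{0}$, the cocycle property factors $\varphi(nT,\theta_{-nT}\omega,\cdot)$ into $n$ successive one-step maps, each a $\lambda$-contraction, so that $\rho_{H}(\varphi((n+1)T,\theta_{-(n+1)T}\omega,x_{0}),\varphi(nT,\theta_{-nT}\omega,x_{0}))\le\lambda^{n}\,\mathrm{diam}_{\rho_{H}}(M_{0})$, whence the pullback sequence $\varphi(nT,\theta_{-nT}\omega,x_{0})$ is Cauchy. Since $\intsimpN$ is complete under $\rho_{H}$, it converges to a limit $a(\omega)$, shown independent of $x_{0}$ by the contraction, and I set $A(\omega)=\{a(\omega)\}$. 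Measurability of $\omega\mapsto a(\omega)$ follows as a pointwise limit of the $\mathcal{F}$-measurable maps $\omega\mapsto\varphi(nT,\theta_{-nT}\omega,x_{0})$, and $\varphi$-invariance $\varphi(T,\omega,a(\omega))=a(\theta_{T}\omega)$ follows by passing to the limit in the cocycle identity; invariance for all $t\ge0$ and pullback attraction of bounded families then follow from dissipativity, the contraction, and continuity in $t$.

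Finally, for asymptotic stability I would use forward invariance $a(\theta_{t}\omega)=\varphi(t,\omega,a(\omega))$, the fact that a positive linear map is non-expansive in $\rho_{H}$, and the $\lambda$-contraction at each multiple of $T$ to write, for $\bp_{0}\in\intsimpN$,
\[
\rho_{H}\bigl(\bp(t,\omega,\bp_{0}),\,a(\theta_{t}\omega)\bigr)=\rho_{H}\bigl(\varphi(t,\omega,\bp_{0}),\varphi(t,\omega,a(\omega))\bigr)\le\lambda^{\lfloor t/T\rfloor}\,\rho_{H}(\bp_{0},a(\omega))\to0,
\]
which is finite to start with because both arguments lie in $\intsimpN$. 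For $t\ge T$ both points lie in the compact set $M_{0}\subset\intsimpN$, on which $\rho_{H}$ and the Euclidean norm induce the same topology, so convergence to $0$ in $\rho_{H}$ forces $\|\bp(t,\omega,\bp_{0})-a(\theta_{t}\omega)\|\to0$, completing the proof.
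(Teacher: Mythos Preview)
Your proposal is correct and follows essentially the same route as the paper: reduce to the time-$T$ (in the paper, time-one) map and invoke the discrete-time contraction mechanism in the Hilbert projective metric. The paper's own proof is a one-line citation of the authors' discrete-time paper \cite{KloKoz:DCDSB11}; you have in effect unpacked that citation, supplying the Birkhoff--Hopf contraction with a uniform projective-diameter bound (via \eqref{E-mainineq} below and Gronwall above), the pullback Cauchy-sequence construction of $a(\omega)$, and the passage from $\rho_{H}$-convergence back to norm convergence on the compact absorbing set $M_{0}$---all of which are precisely the ingredients of \cite{KloKoz:DCDSB11}. So the strategy is the same; your version is simply self-contained rather than deferred.
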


\begin{proof}
The proof  follows by an application of the discrete-time case in
\cite{KloKoz:DCDSB11} to, e.g., the time-one mapping of the RDE,
i.e., the cocycle mapping at integer time values. The uniform
dissipativity of the cocycle ensures the existence of a random
attractor, while the uniform contractivity implies that the
components subsets are singleton sets.
\end{proof}


  \providecommand{\nosort}[1]{} \providecommand{\bbljan}[0]{January}
  \providecommand{\bblfeb}[0]{February} \providecommand{\bblmar}[0]{March}
  \providecommand{\bblapr}[0]{April} \providecommand{\bblmay}[0]{May}
  \providecommand{\bbljun}[0]{June} \providecommand{\bbljul}[0]{July}
  \providecommand{\bblaug}[0]{August} \providecommand{\bblsep}[0]{September}
  \providecommand{\bbloct}[0]{October} \providecommand{\bblnov}[0]{November}
  \providecommand{\bbldec}[0]{December}

\end{document}